
\documentclass[12pt]{amsart}
\usepackage{graphicx,amssymb}
\usepackage{longtable}

\usepackage{times}

\ExecuteOptions{usename}
\hoffset=-1.5cm
\setlength{\textwidth}{15.3cm}
\setlength{\textheight}{23.8cm}
\setlength{\topmargin}{0cm}

\newtheorem{theorem}{Theorem}[section]
\newtheorem{proposition}[theorem]{Proposition}
\newtheorem{lemma}[theorem]{Lemma}





\theoremstyle{definition}
\newtheorem{definition}[theorem]{Definition}

\newcommand{\bdd}{\mbox{$\partial$}}

\newcommand{\inter}{\mbox{${\rm Int}$}}

\theoremstyle{remark}
\newtheorem{remark}[theorem]{Remark}

\numberwithin{equation}{section}

\begin{document}

\title[Twisted Alexander invariants and hyperbolic volume]
{Twisted Alexander invariants and hyperbolic volume} 

\date{\today}

\subjclass[2000]{Primary 57M27, Secondary 57M25}

\author{Hiroshi Goda}
\address{Department of Mathematics,
Tokyo University of Agriculture and Technology,
2-24-16 Naka-cho, Koganei,
Tokyo 184-8588, Japan}
\email{goda@cc.tuat.ac.jp}

\begin{abstract}
We give a volume formula of hyperbolic knot complements 
using twisted Alexander invariants.
\end{abstract}

\thanks{
This work was supported by JSPS KAKENHI 
Grant Numbers JP15K04868.
}

\keywords{twisted Alexander polynomial, hyperbolic knot, volume}

\maketitle

\section{Introduction}
The purpose of this note is to give a formula of the hyperbolic volume 
of a knot complement using twisted Alexander invariants. 

A twisted Alexander polynomial was first defined in \cite{lin} for knots in the 3-sphere, 
and Wada (\cite{wada})
generalized this work and showed how to define a twisted Alexander polynomial 
given only a presentation of a group and representations to $\mathbb Z$ and $\text{GL}(V)$ 
where $V$ is a finite dimensional vector space over a field. 
In \cite{kitano}, Kitano proved that in the case of knot groups the twisted 
Alexander polynomial can be regarded as a Reidemeiser torsion.

Let $M$ be a compact and oriented 3-manifold whose interior admits 
a finite volume hyperbolic structure. 
Porti (\cite{porti1}) has investigated the Reidemeister torsion of $M$ 
associated with the adjoint representation 
$\text{Ad}{\circ}\text{Hol}_M$ of its holonomy 
representation $\text{Hol}_M :\pi_1(M)\to\text{PSL}(2,\mathbb C)$,
and then Yamaguchi showed in \cite{yamaguchi1} a relationship between 
the Porti's Reidemeister torsion and the twisted Alexander invariant explicitly.

M\"{u}ller's work (\cite{muller}) provides the relation between the Ray-Singer 
torsion and the hyperbolic volume of a compact hyperbolic 3-manifold. 
By another work (\cite{muller0}) of M\"{u}ller on the equivalence 
between the Reidemeister torsion 
and the Ray-Singer torsion for unimodular representations, 
we know the hyperbolic volume of a compact 3-manifold can be expressed using 
a Reidemeister torsion. 
After the works, 
Menal-Ferrer and Porti (\cite{menal-porti2}) obtained a formula of the volume 
of a cusped hyperbolic 3-manifold $M$ using `Higher-dimensional Reidemeister torsion invariants', 
which are associated with representations $\rho_n:\pi_1(M)\to\text{SL}(n,\mathbb C)$
corresponding to the holonomy representation $\text{Hol}_M:\pi_1(M)\to\text{PSL}(2,\mathbb C)$
(see Section 3 for the detail).

In this note, we show that the Yamaguchi's method in \cite{yamaguchi0,yamaguchi1} 
is applicable to Higher-dimensional 
Reidemeister torsion invariants, so that we have  a formula of the hyperbolic volume 
of a knot complement using twisted Alexander invariants. 
Let $\Delta_{K,\rho_n}(t)$ be a twisted Alexander invariant of Wada's notation (\cite{wada}).
For the integer $k (>1)$,
set $\mathcal A_{K,2k}(t):=\frac{\Delta_{K,\rho_{2k}}(t)}{\Delta_{K,\rho_2}(t)}$ 
and $\mathcal A_{K,2k+1}(t):=\frac{\Delta_{K,\rho_{2k+1}}(t)}{\Delta_{K,\rho_3}(t)}$.

\begin{theorem}\label{thm:main}
Let $K$ be a hyperbolic knot in the 3-sphere. Then
$$
\lim_{k\to\infty}\frac{\log|\mathcal A_{K,2k+1}(1)|}{(2k+1)^2}
=
\lim_{k\to\infty}\frac{\log|\mathcal A_{K,2k}(1)|}{(2k)^2}
=
\frac{{\rm Vol}(K)}{4\pi}.
$$
\end{theorem}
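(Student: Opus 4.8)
The plan is to reduce the statement to the asymptotic volume formula of Menal-Ferrer and Porti (\cite{menal-porti2}) for higher-dimensional Reidemeister torsion, by identifying the value at $t=1$ of the normalized twisted Alexander invariants $\mathcal A_{K,n}$ with the corresponding Reidemeister torsion of the knot exterior. First I would recall the relevant output of \cite{menal-porti2}: writing $M$ for the exterior of $K$ and $\rho_n=\mathrm{Sym}^{n-1}\circ\widetilde{\mathrm{Hol}}_M$ for the composition of a lift of the holonomy with the $(n-1)$-st symmetric power, they produce Reidemeister torsion invariants $\mathbb T_n(M)$ and prove
$$\lim_{k\to\infty}\frac{\log|\mathbb T_{2k}(M)|}{(2k)^2}=\lim_{k\to\infty}\frac{\log|\mathbb T_{2k+1}(M)|}{(2k+1)^2}=\frac{\mathrm{Vol}(K)}{4\pi}.$$
The even and odd families are handled separately there because $\rho_n$ is a genuine $\mathrm{SL}(2,\mathbb C)$-representation when $n$ is even and factors through $\mathrm{PSL}(2,\mathbb C)$ when $n$ is odd; accordingly the cohomology of $M$ with coefficients in $\rho_n$, and the natural bases on it coming from the cusp, differ in the two parity classes.

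The heart of the argument is to upgrade Yamaguchi's identification (\cite{yamaguchi0,yamaguchi1}) of Porti's torsion with the twisted Alexander invariant from the adjoint case $n=3$ to all $n$. By Kitano's theorem (\cite{kitano}), Wada's invariant $\Delta_{K,\rho_n}(t)$ is, up to a unit, the Reidemeister torsion of the twisted chain complex of $M$ with coefficients in $\rho_n\otimes t^{\alpha}$, where $\alpha\colon\pi_1(M)\to\mathbb Z=\langle t\rangle$ is the abelianization sending the meridian to $t$. I would then analyze the behaviour of this torsion as $t\to1$. The boundary torus contributes invariant vectors, since the $(n-1)$-st symmetric power of a parabolic is a single Jordan block and hence $H^0(\partial M;\rho_n)$ is one-dimensional for every $n$; consequently the twisted complex fails to be acyclic at $t=1$ and $\Delta_{K,\rho_n}(t)$ acquires a zero or pole there whose order is governed by the cusp cohomology and is constant within each parity class. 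Forming the ratios $\mathcal A_{K,2k}=\Delta_{K,\rho_{2k}}/\Delta_{K,\rho_2}$ and $\mathcal A_{K,2k+1}=\Delta_{K,\rho_{2k+1}}/\Delta_{K,\rho_3}$ cancels this common factor, so that $\mathcal A_{K,n}(1)$ is finite and nonzero; after matching the combinatorial basis implicit in Wada's normalization with the cusp-adapted basis used in \cite{menal-porti2}, it equals $\mathbb T_n(M)$ up to sign. Because only $\log|\cdot|$ enters, the $\pm t^{j}$ ambiguity of Wada's invariant and the overall sign are harmless.

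Combining the two steps gives $\log|\mathcal A_{K,n}(1)|=\log|\mathbb T_n(M)|$, and the theorem then follows from the Menal-Ferrer--Porti asymptotics along the even and odd subsequences. The main obstacle is the precise analysis at $t=1$: one must pin down the order of vanishing of $\Delta_{K,\rho_n}(t)$ there (equivalently, show that the non-acyclic contribution of the cusp agrees with that of $\rho_2$ for even $n$ and of $\rho_3$ for odd $n$) and verify that $\lim_{t\to1}\mathcal A_{K,n}(t)$ recovers exactly the torsion with the cusp-adapted basis of \cite{menal-porti2}, rather than a quantity differing from it by an uncontrolled factor. This is the step where Yamaguchi's technique of comparing acyclic and non-acyclic Reidemeister torsion must be carried out uniformly in $n$, and where the bridge to the analytic side (M\"uller, \cite{muller,muller0}) underlying the volume formula is ultimately invoked.
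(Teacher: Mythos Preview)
Your proposal is essentially the paper's own approach: identify $\mathcal A_{K,n}(1)$ with the Menal-Ferrer--Porti torsion quotient $\mathcal T_n(M_K)$ via Kitano's theorem and Yamaguchi's technique, then invoke Theorem~\ref{thm:portivolume}. One correction, however: your claim that ``the twisted complex fails to be acyclic at $t=1$'' for all $n$ is wrong in the even case. By Theorem~0.2 of \cite{menal-porti1} one has $H^*(M_K,\rho_{2k})=0$, so $\Delta_{K,\rho_{2k}}(1)$ is already finite and nonzero and equals ${\rm tor}(M_K,\rho_{2k})$ by direct evaluation $t\mapsto 1$ at the chain level; there is no pole or zero to cancel, and the ratio $\mathcal A_{K,2k}(1)=\mathcal T_{2k}(M_K)$ follows immediately. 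The genuine work, and the only place Yamaguchi's limiting argument is needed, is the odd case: there $\dim H_1(M_K,\rho_{2k+1})=\dim H_2(M_K,\rho_{2k+1})=1$, the paper shows ${\rm tor}(M_K,\alpha\otimes\rho_{2k+1})$ has a \emph{simple} zero at $t=1$ (Proposition~\ref{prop:yamaguchi}) by exhibiting an explicit subcomplex built from the cusp-invariant vector and the longitude, and identifies the limit $\lim_{t\to 1}\Delta_{K,\rho_{2k+1}}(t)/(t-1)$ with ${\rm tor}(M_K,\rho_{2k+1},{\bf h})$ for the cusp-adapted basis ${\bf h}$. So your ``main obstacle'' is real only for odd $n$, and its resolution is exactly the content of Proposition~\ref{prop:yamaguchi}.
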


In the last section, we give some calculations for the figure eight knot.
The details, including link case, will be given elsewhere.

The author wishes to express his thank to Professor Yoshikazu Yamaguchi 
for many helpful conversations.
He also thank to Professors Takahiro Kitayama, 
Takayuki Morifuji and Joan Porti for several comments. 

\section{Reidemeister torsions and twisted Alexander invariants}

Following \cite{porti2} and \cite{yamaguchi1}, we review some definitions and conventions 
in this section. 

Let $\mathbb F$ be a field and $C_*=(C_*,\bdd)$ 
a chain complex of finite dimensional $\mathbb F$-vector spaces: 
$$
0\to C_d\overset{\bdd}{\to}C_{d-1}\overset{\bdd}{\to}\cdots\overset{\bdd}{\to}C_0\to 0. 
$$
For each $i$, we denote by 
$B_i= {\rm Im}(C_{i+1}\overset{\bdd}{\to}C_i)$, 
$Z_i=\ker(C_{i}\overset{\bdd}{\to}C_{i-1})$, 
and 
the homology is denoted by 
$H_i=Z_i/B_i$. 
By the definition of $Z_i,\,B_i$ and $H_i$, 
we obtain the following exact sequence: 
\begin{align*}
0\to Z_i\to&\, C_i\overset{\bdd}{\to} B_{i-1}\to 0,\\ 
0\to B_i\to&\, Z_i\to H_{i}\to 0.
\end{align*}
Let $\widetilde{B}_{i-1}$ be a lift of $B_{i-1}$ to $C_{i}$, 
and $\widetilde{H}_{i}$ a lift of $H_i$ to $Z_{i}$. 
Then we can decompose $C_i$ as follows: 
\begin{align*}
C_i &= Z_{i}\oplus\widetilde{B}_{i-1}\\
    &= B_i\oplus\widetilde{H}_{i}\oplus\widetilde{B}_{i-1}.
\end{align*}  

Let $c^i$
be a basis for $C_i$ 
and 
${\bf c}$ the collection $\{c^i\}_{i\ge 0}$.
Similarly,
let 
$h^i$ be a basis for $H_i$, if nonzero, and 
${\bf h}$ the collection $\{h^i\}_{i\ge 0}$.
We choose $b^i$ a basis of $B_i$. 
Let $\widetilde{b}^{i-1}$ be a lift of $b^{i-1}$ to $C_{i}$, 
and $\widetilde{h}^{i}$ a lift of $h^{i}$ to $Z_i$, then 
we have a new basis $b^i\sqcup\widetilde{b}^{i-1}\sqcup\widetilde{h}^{i}$ of $C_i$, 
where $\sqcup$ means a disjoint union.
We denote by $[b^i,\widetilde{b}^{i-1},\widetilde{h}^{i}/c^i]$ 
the determinant of the transformation matrix from the basis $c^i$ 
to $b^i\sqcup\widetilde{b}^{i-1}\sqcup\widetilde{h}^{i}$. 

\begin{definition}\label{def:torsion}
The torsion of the chain complex $C_*$ with basis ${\bf c}$ and ${\bf h}$ 
for $H_i$ is: 
$$\text{tor}(C_*,{\bf c},{\bf h})
=
\prod^{d}_{i=0}[b^i,\widetilde{b}^{i-1},\widetilde{h}^{i}/c^i]^{(-1)^{i+1}}
\hspace{0.3cm}
\in\mathbb F^*/\{\pm 1\}
$$
\end{definition}

It is known that $\text{tor}(C_*,{\bf c},{\bf h})$ is independent of 
the choice of $b^i$ and the lifts $\widetilde{b}^{i-1}$ and $\widetilde{h}^{i}$. 

\begin{remark}\label{rmk:sign}
In \cite{menal-porti2}, they use $(-1)^i$ instead of $(-1)^{i+1}$ 
in Definition \ref{def:torsion}. 
Then the sign of the right hand side of the equation in Theorem 7.1 
in \cite{menal-porti2} 
becomes opposite. 
See Remark 2.2 and Theorem 4.5 
in \cite{porti2}.
\end{remark}

Let $W$ be a finite CW-complex, and 
$\rho:\pi_1(W,\ast)\to\text{SL}(n,\mathbb F)$
a representation of its fundemental group. 
Consider the chain complex of vector spaces 
$$C_*(W,\rho):=\mathbb F^n\otimes_{\rho}C_*(\widetilde{W};\mathbb Z)$$
where $C_*(\widetilde{W},\mathbb Z)$ denotes 
the simplicial complex of the universal covering of $W$ 
and $\otimes_{\rho}$ means that one takes the quotient 
of $\mathbb F^n\otimes_{\mathbb Z}C_*(\widetilde{W};\mathbb Z)$ 
by $\mathbb Z$-module generated by 
$$\rho(\gamma)^{-1}v\otimes c-v\otimes\gamma\cdot c.$$
Here, 
$v\in\mathbb F^n,\,\gamma\in \pi_1(W,\ast)$ 
and $c\in C_*(\widetilde{W};\mathbb Z)$. 
Namely, 
$$v\otimes \gamma\cdot c=\rho(\gamma)^{-1}v\otimes c 
\hspace{0.5cm}
\forall\gamma\in\pi_1(W,p).
$$
The boundary operator is defined by linearity 
and 
$\bdd(v\otimes c)=(\text{Id}\otimes\bdd)(v\otimes c)=v\otimes\bdd c$. 
We denote by $H_*(W,\rho)$ the homology of this complex. 

Let $\{v_1,\ldots, v_n\}$ be  a basis of $\mathbb F^n$ and
let $c^i_1,\ldots , c^i_{k_i}$ denote the set of $i$-dimensional cells of $W$.
We take a lift $\tilde{c}^i_j$ of the cell $c^i_j$ in $\widetilde{W}$. 
Then, for each $i$, 
$\tilde{c}^i=\{\tilde{c}^i_1,\ldots ,\tilde{c}^i_{k_i}\}$ is a basis of 
the $\mathbb Z[\pi_1(W)]$-module $C_i(\widetilde{W};\mathbb Z)$. 
Thus we have the following basis of $C_i(W,\rho)$: 
$$c^i=\{v_1\otimes \tilde{c}^i_1,v_2\otimes \tilde{c}^i_1,\ldots,v_n\otimes \tilde{c}^i_{k_i}\}.$$ 
Suppose $H_i(W,\rho)\neq 0$, 
and $h^i$ be a basis of $H_i(W;\rho)$.
We denote by ${\bf h}$ the basis 
$\{h^0,\ldots, h^{\text{dim} W}\}$ of $H_*(W,\rho)$.
Then $\text{tor}(C_*(W,\rho),{\bf c},{\bf h})\,(\in \mathbb F^*/\{\pm 1\})$ 
is well defined. 
Note that it does not depend on the lifts of the cells $\tilde{c}^i$ 
since $\det\rho=1$. 
Further, if the Euler characteristic of $W$ is equal to zero 
(e.g. the case that $W$ corresponds to a knot exterior), 
it does not depend on the choice of a basis $\{v_1,\ldots, v_n\}$
(cf. Lemma 2.4.2 \cite{yamaguchi1}).
\begin{remark}
The Reidemeister torsion is independent of the choice of a base point 
$b$ of the fundamental group $\pi_1(W,\ast)$.
Furthermore, 
it is known that the Reidemeister torsion is an invariant 
under subdivision of the cell decomposition of $W$ with $\rho$-coefficients 
up to factor $\pm 1$. 
\end{remark}
\begin{remark}
Let $K$ be a knot in the 3-sphere $S^3$ 
and $M_K=S^3-\inter N(K)$.
We denote by $G(K)$ the fundamental group of $M_K$.
From the result of Waldhausen \cite{waldhausen}, the Whitehead group $\text{Wh}(G(K))$ 
is trivial. 
In such case, the Reidemeister torsion does not depend on 
the choice of its CW-structure.
Suppose $H_{*}(M_K,\rho)=0$. 
Then the Reidemeister torsion does not depend on ${\bf h}=\emptyset$. 
In this case we denote by $\text{tor}(M_K,\rho)$
the Reidemeister torsion.
\end{remark}

Let $\alpha$ be a surjective homomorphsim from $\pi_1(W,\ast)$ to 
the multiplicative group $\langle t\rangle$.
Instead of a representation $\rho:\pi_1(W,\ast)\to\text{SL}(n,\mathbb F)$, 
consider the twisted representation:
$$\alpha\otimes\rho:\pi_1(W,\ast)\to \text{GL}(\mathbb F(t)),$$
where $\mathbb F(t)$ is the filed of franction of the polynomial ring 
$\mathbb F[t]$.
By the same method as above, 
we can define 
$\text{tor}(C_*(W,\alpha\otimes\rho),{\bf 1}\otimes{\bf c},{\bf h})
\,(\in \mathbb F^*(t)/\{\pm t^{n\mathbb Z}\})$. 
As the determinant is not one, 
there is an independency factor $t^{nm}$, for some integer $m$.
More preciuosly, we define:
$$
C_*(W,\alpha\otimes\rho)
=
\mathbb F(t)\otimes_{\mathbb F}\mathbb F^n\otimes_{\rho}C_*(\widetilde{W};\mathbb Z),
$$
where the action is given by 
$f\otimes v\otimes(\gamma\cdot c)
=
f\cdot t^{\alpha(\gamma)}\otimes\rho(\gamma)^{-1}v\otimes c
$
for $\gamma\in\pi_1(W,p)$.
The boundary operator is defined by linearity and 
$\bdd(f\otimes v\otimes c)=f\otimes v\otimes \bdd c$.

Kitano (\cite{kitano}) investigated the relationship between the Reidemeister torsions
and the twisted Alexander invariants for knots. Namely, he proved that:
\begin{theorem}[\cite{kitano}]\label{thm:kitano}
Let $K$ be a knot in the 3-sphere $S^3$ 
and $M_K=S^3-\inter N(K)$.
Suppose $\rho$ is a non-trivial representation such that $H_*(M_K,\rho)=0$. 
Then, $H_*(M_K,\alpha\otimes\rho)=0$ and 
${\rm tor}(M_K,\alpha\otimes\rho)
=
\Delta_{K,\rho}(t)$, 
where $\Delta_{K,\rho}(t)$ is the twisted Alexander invariant.
\end{theorem}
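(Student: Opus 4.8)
The plan is to compute the Reidemeister torsion on an explicit cell structure and recognize the outcome as Wada's invariant. First I would fix a deficiency-one presentation of the knot group, say a Wirtinger presentation $G(K)=\langle x_1,\ldots,x_u\mid r_1,\ldots,r_{u-1}\rangle$ (one relator is redundant, so $u$ generators and $u-1$ relators suffice), and let $W$ be the associated presentation $2$-complex, with one $0$-cell, the generators as $1$-cells, and the relators as $2$-cells. Classically $W$ is a spine of $M_K$, so it is homotopy equivalent to $M_K$; combined with the vanishing of $\text{Wh}(G(K))$ noted above (Waldhausen), this lets me compute $\text{tor}(M_K,\alpha\otimes\rho)$ on $W$, since the torsion is then independent of the cell structure. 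Lifting the cells to the universal cover and applying Fox calculus, the twisted complex becomes
$$
0\to\mathbb{F}(t)^{n(u-1)}\xrightarrow{\partial_2}\mathbb{F}(t)^{nu}\xrightarrow{\partial_1}\mathbb{F}(t)^{n}\to 0,
$$
where $\partial_2$ is the block Fox--Jacobian matrix $\bigl(\Phi(\partial r_i/\partial x_j)\bigr)_{i,j}$ and $\partial_1$ is the column $\bigl(\Phi(x_j)-I\bigr)_j$, with $\Phi=\alpha\otimes\rho$ extended linearly to the group ring. This reduces the statement to linear algebra over $\mathbb{F}(t)$.

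Next I would establish acyclicity, which must be checked before the torsion is defined. The cleanest route is the Wang exact sequence of the infinite cyclic cover $\widetilde{M}$ determined by $\alpha$: the hypothesis $H_*(M_K,\rho)=0$ forces the deck action $t-1$ to be an isomorphism on each $H_i(\widetilde{M},\rho)$. Since these are finitely generated modules over the PID $\mathbb{F}[t^{\pm1}]$, an endomorphism $t-1$ that is an isomorphism can occur only on a torsion module, so $H_i(\widetilde{M},\rho)$ is $\mathbb{F}[t^{\pm1}]$-torsion and therefore vanishes after the flat base change to $\mathbb{F}(t)$. Hence $H_*(M_K,\alpha\otimes\rho)=0$, as claimed.

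With acyclicity in hand I would evaluate Definition \ref{def:torsion} directly. Singling out one generator $x_k$, the block $\Phi(x_k)-I=t\,\rho(x_k)-I$ of $\partial_1$ is invertible in $\mathbb{F}(t)$ (up to a monomial it is the characteristic polynomial of $\rho(x_k)$, which is not identically zero since $\rho(x_k)\in\text{SL}(n,\mathbb{F})$), so it provides a lift $\widetilde{B}_0$ of $B_0=C_0$. The projection of $\text{Im}\,\partial_2$ onto the remaining coordinates is then the $(u-1)$-block square matrix $A_k=\bigl(\Phi(\partial r_i/\partial x_j)\bigr)_{j\neq k}$, which acyclicity makes invertible. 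Assembling the alternating product of Definition \ref{def:torsion} yields, up to the stated indeterminacy,
$$
\text{tor}(M_K,\alpha\otimes\rho)=\frac{\det A_k}{\det(\Phi(x_k)-I)},
$$
which is precisely Wada's definition of $\Delta_{K,\rho}(t)$.

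I expect the main obstacle to be bookkeeping rather than conceptual content. One must verify that the two expressions agree in $\mathbb{F}^{*}(t)/\{\pm t^{n\mathbb{Z}}\}$, so that the sign ambiguity and the $t^{n\mathbb{Z}}$-indeterminacy arising from the non-unimodularity of $\alpha\otimes\rho$ are matched on both sides, and that the value is independent of which generator $x_k$ is deleted and of the chosen presentation. The independence of presentation is exactly where the triviality of $\text{Wh}(G(K))$ enters, while independence of $x_k$ follows from the standard fundamental identity $\partial r_i/\partial x_k=-\sum_{j\neq k}(\partial r_i/\partial x_j)\,(\partial x_j/\partial x_k)$ of Fox calculus together with multiplicativity of the determinant.
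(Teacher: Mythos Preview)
The paper does not supply its own proof of this statement: Theorem~\ref{thm:kitano} is quoted from Kitano \cite{kitano} (with a pointer to Theorem~2.13 of \cite{porti2}) and used as a black box, so there is no in-paper argument to compare against. Your outline is essentially Kitano's original proof --- compute on the presentation $2$-complex, identify the boundary maps with the Fox--Jacobian, and read off Wada's quotient of determinants --- and it is correct in substance. Your acyclicity argument via the Wang sequence of the infinite cyclic cover is a clean way to get $H_*(M_K,\alpha\otimes\rho)=0$ from $H_*(M_K,\rho)=0$; the only ingredient to make explicit is that $H_*(M_K,\alpha\otimes\rho)\cong\mathbb{F}(t)\otimes_{\mathbb{F}[t^{\pm1}]}H_*(\widetilde{M},\rho)$ by flatness of the fraction field, so torsion modules die.

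One small correction in your final paragraph: the ``fundamental identity'' you invoke is not $\partial r_i/\partial x_k=-\sum_{j\neq k}(\partial r_i/\partial x_j)(\partial x_j/\partial x_k)$, which is vacuous as written. The relevant Fox identity is $\sum_j(\partial r_i/\partial x_j)(x_j-1)=r_i-1$, which after applying $\Phi$ and using $\Phi(r_i)=I$ gives $\sum_j\Phi(\partial r_i/\partial x_j)\bigl(\Phi(x_j)-I\bigr)=0$. This is what shows the quotient $\det A_k/\det(\Phi(x_k)-I)$ is independent of $k$.
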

See also Theorem 2.13 in \cite{porti2}.
The twisted Alexander invariant can be computed using the Fox calculus 
\cite{kirk-livingston, kitano, wada}.

\section{Representations of the fundamental groups of hyperbolic 3-manifolds}

Let $M$ be an oriented, complete, hyperbolic 3-manifold of finite volume. 
Then $M$ has the holonomy representation: 
${\rm Hol}_M:\pi_1(M,\ast)\to {\rm Isom^+ \mathbb H^3},$
where ${\rm Isom}^+\mathbb H^3$
is the orientation preserving isometry group of hyperbolic 3-space $\mathbb H^3$. 
Using the upper half-space model, ${\rm Isom}^+\mathbb H^3$ is identified with PSL$(2,\mathbb C)
=\text{SL}(2,\mathbb C)/\{\pm 1\}$. 
It is known that ${\rm Hol}_M$ can be lifted to SL$(2,\mathbb C)$, and 
such lifts are in canonical one-to-one correspondence with spin structures on $M$.
Thus, attached to a fixed spin structure $\eta$ on $M$, we get a representation:
$$
\text{Hol}_{(M,\eta)}: \pi_1((M,\eta),\ast)\to\text{SL}(2,\mathbb C).$$

Let $W$ be a finite CW-complex and $\rho$ a representation of $\pi_1(W,\ast)$ 
to $\text{SL}(2,\mathbb C)$. 
Then the pair $(\mathbb C^2,\rho)$ is an $\text{SL}(2,\mathbb C)$-representation 
of $\pi_1(W,\ast)$ by the standrd action $\text{SL}(2,\mathbb C)$ to $\mathbb C^2$.
It is known that the pair of the symmetric product $\text{Sym}^{n-1}(\mathbb C^2)$ 
and the induced action by $\text{SL}(2,\mathbb C)$ gives an $n$-dimensional 
irreducible representation of $\text{SL}(2,\mathbb C)$. 
More precisely, 
let $V_n$ be the vector space of homogeneous polynomials on $\mathbb C^2$ 
with degree $n-1$, that is,
$$
V_n
=\text{span}_{\mathbb C}
\langle
x^{n-1}, x^{n-2}y,\ldots,xy^{n-2},y^{n-1}
\rangle.
$$
Then 
the symmetric product $\text{Sym}^{n-1}(\mathbb C^2)$ can be identified 
with $V_n$ and the action of $A\in\text{SL}(2,\mathbb C)$ 
is expressed as 
$$
A\cdot p
\begin{pmatrix}
x\\
y
\end{pmatrix}
=
p\big(A^{-1}
\begin{pmatrix}
x\\
y
\end{pmatrix}
\big)
$$
where 
$\displaystyle{p
\begin{pmatrix}
x\\
y
\end{pmatrix}}$ 
is a homogeneous polynomial 
and the right hand side is determined by the action of $A^{-1}$ 
on the column vector as a matrix multiplication.
We denote by $(V_n,\sigma_n)$ the representation given by this action 
of $\text{SL}(2,\mathbb C)$ where $\sigma_n$ means the homomorphism
from $\text{SL}(2,\mathbb C)$ to $\text{GL}(V_n)$. 
It is known that 
each representation $(V_n,\sigma_n)$ turns into an irreducible 
$\text{SL}(n,\mathbb C)$-representation of $\text{SL}(2,\mathbb C)$ 
and that 
every irreducible $n$-dimensional representation of $\text{SL}(2,\mathbb C)$ 
is equivalent to $(V_n,\sigma_n)$. 
Composing $\text{Hol}_{(M,\eta)}$ with $\sigma_n$, 
we obtain the following representation:
$$
\rho_n:\pi_1((M,\eta),\ast)\to {\rm SL}(n,\mathbb C).
$$

In the following section, 
we will discuss Reidemeister torsions associated with this representation $\rho_n$. 
Note that there are several computations of the Reidemeister torsions 
associated with $\sigma_{2k}$ in \cite{yamaguchi2, yamaguchi3}.

\section{The results of Menal-Ferrer and Porti}
In this note, we focus on a knot complement. 
We introduce the results of Menal-Ferrer and Porti 
\cite{menal-porti1,menal-porti2}
in this setting. 

Let $K$ be a hyperbolic knot in the 3-sphere $S^3$, 
that is,  
$S^3-K$ is an oriented, complete, finite-volume hyperbolic manifold
with only one cusp. 
Then, $S^3-K$ may be regarded as the interior of a compact manifold $M_K$ 
such that $\bdd M_K=T$ where $T$ is homeomorphic to a torus $T^2$.
In what follows, we consider the compact manifold $M_K$ instead of $S^3-K$.

By  Corollary 3.7 in \cite{menal-porti1}, we have that
$\dim_{\mathbb C}H^i(M_K,\rho_n)=0$ $(i=0,1,2)$
if $n$ is even, and that
$\dim_{\mathbb C}H^0(M_K,\rho_n)=0,$ 
$\dim_{\mathbb C}H^1(M_K,\rho_n)=\dim_{\mathbb C}H^2(M_K,\rho_n)=1$ 
if $n$ is odd.
Further, in \cite{menal-porti2}, Menal-Ferrer and Porti proved the following.
(Note that Poincar\'e duality with coefficients in $\rho_n$ holds 
(Corollary 3.7 in \cite{menal-porti2}.))

\begin{proposition}[Proposition 4.6 in \cite{menal-porti2}]\label{prop:homology}
Suppose that $H_*(T;\rho_n)\neq0$.
Let $G<\pi_1(M_K,\ast)$ be some fixed realization of the fundamental group of $T$ 
as a subgroup of $\pi_1(M_K,\ast)$. 
Choose a non-trivial cycle $\theta\in H_1(T;\mathbb Z)$, 
and a non-trivial vector $v\in V_n$ fixed by $\rho_n(G)$.
Then the following holds:
\begin{enumerate}
\item
A basis for $H_1(M_K,\rho_n)$ is given by $i_*([v\otimes\widetilde\theta])$.
\item
A basis for $H_2(M_K,\rho_n)$ is given by $i_*([v\otimes \widetilde T])$.
\end{enumerate}
Here, $i:T\hookrightarrow M_K$ denotes the inclusion.
\end{proposition}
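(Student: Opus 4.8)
The plan is to treat the substantive case, namely odd $n$. The hypothesis $H_*(T,\rho_n)\neq0$ holds precisely for odd $n$: at the cusp $\rho_n(G)$ consists of unipotent transformations fixing a single line (for even $n$ the sign in the spin lift destroys this, matching $H^*(M_K,\rho_n)=0$), so $V_n^G=\mathbb C v$ is one-dimensional, and $\dim H_1(M_K,\rho_n)=\dim H_2(M_K,\rho_n)=1$. First I would compute $H_*(T,\rho_n)$ from the standard square cell structure on $T=T^2$. Since $H_2(T,\rho_n)\cong V_n^G$ and $H_0(T,\rho_n)\cong(V_n)_G$ are each one-dimensional and $\chi(T)=0$, this gives $\dim H_1(T,\rho_n)=2$. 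By construction $v\otimes\widetilde T$ is a $2$-cycle generating $H_2(T,\rho_n)$, and $v\otimes\widetilde\theta$ is a $1$-cycle for every cycle $\theta$.

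Next I would run the long exact sequence of the pair $(M_K,T)$, using that $V_n=\mathrm{Sym}^{n-1}(\mathbb C^2)$ carries an $\mathrm{SL}(2,\mathbb C)$-invariant bilinear form, so $\rho_n$ is self-dual and Poincar\'e--Lefschetz duality yields $H_k(M_K,T;\rho_n)\cong H^{3-k}(M_K;\rho_n)$. Hence $H_3(M_K,T)\cong H^0(M_K)=0$ and $\dim H_2(M_K,T)=\dim H_1(M_K,T)=1$. The tail $0\to H_2(T)\xrightarrow{i_*}H_2(M_K)$ shows $i_*$ is injective on $H_2$, and as both sides are one-dimensional it is an isomorphism; since $v\otimes\widetilde T$ generates $H_2(T,\rho_n)$, this already proves (2). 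Exactness then forces $\partial\colon H_2(M_K,T)\hookrightarrow H_1(T)$ to be injective with image equal to $\ker\big(i_*\colon H_1(T)\to H_1(M_K)\big)$, a single line, so $i_*$ is surjective on $H_1$.

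For (1) it remains to prove $i_*[v\otimes\widetilde\theta]\neq0$. The key structural point is that the invariant cycles span only a line: because the meridian and longitude holonomies are powers of the same unipotent $\exp(tX)$ with $Xv=0$, a direct boundary computation produces a nonzero linear relation between $[v\otimes\widetilde\mu]$ and $[v\otimes\widetilde\lambda]$ (with coefficients the two cusp translation parameters), so every $[v\otimes\widetilde\theta]$ lies on one line $L\subset H_1(T,\rho_n)$. The statement is therefore all-or-nothing, and I must show $L\neq\ker i_*$. I would approach this through the intersection pairing on $H_1(T,\rho_n)$ coming from the cup product on $T$ and the invariant form on $V_n$: under the adjunction $\langle i_*x,\,b\rangle_{M_K}=\pm\langle x,\,\partial b\rangle_{T}$ between $i_*$ and $\partial$, non-vanishing of $i_*[v\otimes\widetilde\theta]$ is equivalent to $[v\otimes\widetilde\theta]$ pairing non-trivially with a generator $\partial b$ of $\ker i_*$.

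The main obstacle is exactly locating $\ker i_*=\mathrm{im}\,\partial$ relative to $L$; equivalently, showing that $\partial b$, the boundary of the generator of $H_2(M_K,T;\rho_n)$, is \emph{not} of invariant-vector type. Here irreducibility of $\rho_n$ is essential: although the longitude bounds a Seifert surface $\Sigma$ in untwisted homology, that surface supports no twisted relative cycle with coefficient $v$, since $\rho_n(\pi_1 M_K)$ has no nonzero invariant vector and so $v$ is not fixed by $\rho_n(\pi_1\Sigma)$. Thus the untwisted relation $i_*[\lambda]=0$ does \emph{not} persist, and one expects $\ker i_*$ to be spanned by a non-invariant class transverse to $L$. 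Making this quantitative---computing the peripheral restriction $H^1(M_K;\rho_n)\to H^1(T;\rho_n)$ and verifying that its pairing with $v$ is nonzero, which is the content of Menal-Ferrer and Porti's parabolic-cohomology analysis---gives $L\cap\ker i_*=0$. Then $i_*|_L$ is an isomorphism onto $H_1(M_K,\rho_n)$, so $i_*[v\otimes\widetilde\theta]$ is a basis for every non-trivial $\theta$, completing (1).
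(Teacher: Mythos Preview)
The paper does not prove this proposition at all: it is quoted verbatim as Proposition~4.6 of \cite{menal-porti2} and used as a black box, so there is no argument in the paper to compare your proposal against.

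Assessing your sketch on its own merits: the treatment of (2) via the long exact sequence of $(M_K,T)$ together with Poincar\'e--Lefschetz duality and the dimension count is clean and correct. For (1), your reduction is sound---the classes $[v\otimes\widetilde\theta]$ do span a single line $L\subset H_1(T;\rho_n)$ (your boundary computation with $w$ chosen one step above $v$ in the unipotent filtration gives exactly the relation $[v\otimes\widetilde\lambda]=\tau\,[v\otimes\widetilde\mu]$), and the problem is precisely whether $L$ meets $\ker i_*=\mathrm{im}\,\partial$. However, at the decisive point you explicitly invoke ``Menal-Ferrer and Porti's parabolic-cohomology analysis'' to conclude $L\cap\ker i_*=0$. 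That is the entire content of the proposition in the odd case, so your argument is ultimately circular: it reduces the statement to itself (or to an equivalent statement in the same source). The heuristic about Seifert surfaces not supporting a $v$-coefficient relative cycle is suggestive but not a proof; what is actually needed is the half-dimensionality/non-degeneracy statement for the image of $H^1(M_K;\rho_n)\to H^1(T;\rho_n)$, and that requires the genuine input from \cite{menal-porti1,menal-porti2} which you have not reproduced.
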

Set 
$h^1=i_*([v\otimes\widetilde\theta]), 
 h^2=i_*([v\otimes \widetilde T])$, 
and
${\bf h}=\{h^1,h^2\}$.
On the other hand,
Menal-Ferrer and Porti (Theorem 0.2 in \cite{menal-porti1}) 
proved that $H^*(M_K,\rho_{2k})=0$ for $k\ge 1$.
Therefore, we may define the following quotients.
\begin{align*}
\mathcal T_{2k+1}(M_K,\eta)&:=
\frac{\text{tor}(M_K,\rho_{2k+1},{\bf h})}{\text{tor}(M_K,\rho_3,{\bf h})}
\hspace{0.5cm}\in\mathbb C^*/\{\pm 1\}\\
\mathcal T_{2k}(M_K,\eta)&:=
\frac{\text{tor}(M_K,\rho_{2k})}{\text{tor}(M_K,\rho_2)}
\hspace{0.5cm}\in\mathbb C^*/\{\pm 1\}
\end{align*}
The quantity $\mathcal T_{2k+1}$ is independent of the spin structure 
because of the fact that an odd-dimensional irreducible complex representation 
of $\text{SL}(2,\mathbb C)$ factors through $\text{PSL}(2,\mathbb C)$.
Since $S^3-K$ has only one cusp, then all spin structures on $M_K$ 
are acyclic (Corollary 3.4 in \cite{menal-porti2}). 
This means that $\mathcal T_{2k}$ is also independent of the spin structure
(Theorem 7.1 \cite{menal-porti2}).
Thus it is not necessary to consider a spin structure on $M_K$
in our setting.
Hence, 
the above definition may be simplified to the following form 
deleting $\eta$.
\begin{definition}
\begin{align*}
\mathcal T_{2k+1}(M_K)&:=
\frac{\text{tor}(M_K,\rho_{2k+1},{\bf h})}{\text{tor}(M_K,\rho_3,{\bf h})}
\hspace{0.5cm}\in\mathbb C^*/\{\pm 1\}\\
\mathcal T_{2k}(M_K)&:=
\frac{\text{tor}(M_K,\rho_{2k})}{\text{tor}(M_K,\rho_2)}
\hspace{0.5cm}\in\mathbb C^*/\{\pm 1\}
\end{align*}
\end{definition}
Note that it is proved  
that the quotient 
is independent of the choices ${\bf h}$
(Proposition 4.2 in \cite{menal-porti2}). 
Then, we can reduce Theorem 7.1 in \cite{menal-porti2} to 
the following statement: 

\begin{theorem}[Theorem 7.1 in \cite{menal-porti2}]\label{thm:portivolume}

$$
\lim_{k\to\infty}\frac{\log|\mathcal T_{2k+1}(M_K)|}{(2k+1)^2}
=
\lim_{k\to\infty}\frac{\log|\mathcal T_{2k}(M_K)|}{(2k)^2}=\frac{{\rm Vol}(K)}{4\pi}.
$$
\end{theorem}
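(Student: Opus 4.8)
The plan is to derive this statement from the original Theorem 7.1 of \cite{menal-porti2}, reconciling three cosmetic discrepancies: the sign convention recorded in Remark \ref{rmk:sign}, the normalization of the quotients $\mathcal T_n$ by the fixed torsions $\text{tor}(M_K,\rho_3,{\bf h})$ and $\text{tor}(M_K,\rho_2)$, and the auxiliary homology basis ${\bf h}$. Since the genuine analytic content is packaged in the cited theorem, the work here is essentially bookkeeping, and I would carry it out in that order.

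First I would address the sign. Replacing $(-1)^i$ by $(-1)^{i+1}$ in Definition \ref{def:torsion} inverts the exponent of every factor in the product, so each torsion is replaced by its reciprocal; consequently a quotient such as $\mathcal T_{2k+1}=\text{tor}(M_K,\rho_{2k+1},{\bf h})/\text{tor}(M_K,\rho_3,{\bf h})$ is replaced by its inverse and $\log|\mathcal T_{2k+1}|$ by its negative. This is precisely the sign flip noted in Remark \ref{rmk:sign}, and it converts the value $-{\rm Vol}(K)/4\pi$ appearing with Menal-Porti's convention into the $+{\rm Vol}(K)/4\pi$ of the present statement.

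Next I would remove the denominators. They are independent of $k$, so writing
$$\log|\mathcal T_{2k+1}(M_K)| = \log|\text{tor}(M_K,\rho_{2k+1},{\bf h})| - \log|\text{tor}(M_K,\rho_3,{\bf h})|,$$
the last term is a fixed finite constant and therefore tends to $0$ after division by $(2k+1)^2$. Hence $\lim_k \log|\mathcal T_{2k+1}(M_K)|/(2k+1)^2$ equals $\lim_k \log|\text{tor}(M_K,\rho_{2k+1},{\bf h})|/(2k+1)^2$, and the even case is identical with denominator $\text{tor}(M_K,\rho_2)$. The odd torsions depend on the basis ${\bf h}$ of Proposition \ref{prop:homology}, but the quotient $\mathcal T_{2k+1}$ does not (Proposition 4.2 in \cite{menal-porti2}), so each limit is well defined. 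Combining these reductions with Menal-Porti's Theorem 7.1 gives both limits equal to ${\rm Vol}(K)/4\pi$.

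The true difficulty is not in this reduction but in the theorem it imports, and I would expect that to be the main obstacle were one to prove the result from scratch. The asymptotics rest on identifying $\text{tor}(M_K,\rho_n)$ with the Ray-Singer analytic torsion through a Cheeger-M\"uller type equality valid for the non-unitary but unimodular representations $\rho_n$ (the content of \cite{muller0}), and then on M\"uller's asymptotic expansion \cite{muller} of the analytic torsion of a hyperbolic $3$-manifold, whose leading $n^2$-coefficient is ${\rm Vol}/4\pi$. The delicate point is the non-compactness: the single cusp forces one to control the boundary contributions to the analytic torsion and to account for the homology correction carried by ${\bf h}$, and it is exactly this correction that only cancels cleanly after passing to the quotients $\mathcal T_n$, which is why the statement is phrased in terms of them.
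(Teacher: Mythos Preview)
Your proposal is correct and matches the paper's treatment: the paper does not prove this theorem but simply cites it as Theorem 7.1 of \cite{menal-porti2}, noting only (via Remark \ref{rmk:sign}) that the sign on the right-hand side flips because of the convention $(-1)^{i+1}$ in Definition \ref{def:torsion}. Your bookkeeping on the sign, the $k$-independent denominators, and the basis ${\bf h}$ is accurate and more explicit than what the paper records; the one minor remark is that the quotients $\mathcal T_n$ are already the objects appearing in \cite{menal-porti2}, so the ``remove the denominators'' step is not actually needed to pass between the two formulations.
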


As in Remark \ref{rmk:sign}, the sign of the right hand side is plus.

\section{Proof of Theorem \ref{thm:main}}

\noindent {\bf Case 1.} Even-dimensional representation $\rho_{2k}$ case.

By Theorem 0.2 in \cite{menal-porti1}, 
$H^*(M_K,\rho_{2k})=0$ for $k\ge 1$.
Then, by Theorem \ref{thm:kitano},  
we can prove that 
$\text{tor}(M_K,\rho_{2k})
=
\text{tor}(M_K,\alpha\otimes\rho_{2k})|_{t=1}
=
\Delta_{K,\rho_{2k}}(1)
$ 
from the map at the chain level 
$C_*(M_K,\alpha\otimes\rho_{2k})\to C_*(M_K,\rho_{2k})$ induced by evaluation $t=1$.
Then, we have: 
$$\mathcal T_{2k}(M_K)
=
\frac{\text{tor}(M_K,\rho_{2k})}{\text{tor}(M_K,\rho_2)}
=
\frac{\Delta_{K,\rho_{2k}}(1)}{\Delta_{K,\rho_2}(1)}
=
\mathcal A_{K,2k}(1).
$$
Hence we have done in the case of $\rho_{2k}$ in Theorem \ref{thm:main}: 
$\displaystyle{\lim_{k\to\infty}\frac{\log|\mathcal A_{K,2k}(1)|}{(2k)^2}=\frac{{\rm Vol}(K)}{4\pi}}$
by Theorem \ref{thm:portivolume}.

\medskip

\noindent{\bf Case 2}.
Odd-dimensional representation $\rho_{2k+1}$ case.

Although the idea of the proof is the same as 
Yamaguchi's one in \cite{yamaguchi0, yamaguchi1}, 
I think it is worth outlining it here for the convenience of readers.
He investigated the case of the adjoint representation of 
$\text{SL}(2,\mathbb C)$, 
which is essentially equivalent to $\rho_3$ in our setting.

The homology group 
$H_*(M_K;\mathbb Z)=H_0(M_K;\mathbb Z)\oplus H_1(M_K;\mathbb Z)$ 
has the basis $\{[p], [\mu]\}$, 
where $[p]$ is the homology class of a point 
and $[\mu]$ is that of the meridian of $K$.
Further, 
$H_1(\bdd M_K;\mathbb Z)$ has the basis $\{[\mu],[\lambda]\}$, 
where $[\lambda]$ is the homology class of a longitude of $K$. 
By Proposition \ref{prop:homology}, 
we may define 
$h^1=i_*([v\otimes\widetilde\lambda]),\, 
 h^2=i_*([v\otimes\widetilde T])$
and ${\bf h}=\{h^1, h^2\}$.

It is known that $M_K$ collapses to a 2-dimensional CW-complex $W$
with only one vertex. 
We call $\varphi$ this deformation. 
Thus $M_K$ is simple homotopy equivalent to $W$. 
It is enough to prove the theorem for $W$ 
since a Reidemeister torsion is a simple homotopy invariant. 

By Proposition 3.5 in \cite{kirk-livingston},
we have 
$H_0(W,\alpha\otimes\rho_{2k+1})=0$. 
Further, we have the next lemma by
the same argument as Proposition 7 in \cite{yamaguchi0} 
or Proposition 3.1.1 in \cite{yamaguchi1}. 

\begin{lemma}\label{lem:acyclic}
For $*=1,\,2$, we have: $H_*(M_K,\alpha\otimes\rho_{2k+1})=0$.
\end{lemma}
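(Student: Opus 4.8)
The plan is to reduce the lemma, by formal considerations, to the single statement that $H_2(W,\alpha\otimes\rho_{2k+1})=0$, and then to identify that vanishing with the non-triviality of the twisted Alexander invariant. Write $\mathcal L=\alpha\otimes\rho_{2k+1}$ and work over the field $\mathbb C(t)$. Since $M_K$ is simple homotopy equivalent to the $2$-complex $W$, we have $H_3(W,\mathcal L)=0$; together with the given vanishing $H_0(W,\mathcal L)=0$ and the fact that $\chi(M_K)=0$ (so that $\sum_i(-1)^i\dim_{\mathbb C(t)}H_i(W,\mathcal L)=0$), this forces $\dim_{\mathbb C(t)}H_1(W,\mathcal L)=\dim_{\mathbb C(t)}H_2(W,\mathcal L)$. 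Hence it suffices to prove $H_2(W,\mathcal L)=0$, equivalently that the top boundary map $\partial_2\colon C_2(W,\mathcal L)\to C_1(W,\mathcal L)$ is injective.

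Before attacking $H_2$, I would record two supporting facts. First, the boundary torus is acyclic: restricting $\mathcal L$ to $\pi_1(T)=\langle\mu,\lambda\rangle\cong\mathbb Z^2$, the peripheral elements are sent by the holonomy to parabolics, so $\rho_{2k+1}(\mu)$ is unipotent and all eigenvalues of $(\alpha\otimes\rho_{2k+1})(\mu)=t\,\rho_{2k+1}(\mu)$ equal $t$; thus $\det\!\big(I-(\alpha\otimes\rho_{2k+1})(\mu)\big)=(1-t)^{2k+1}\neq0$ in $\mathbb C(t)$. Feeding this invertibility into the Koszul complex that computes $H_*(\mathbb Z^2;V_{2k+1}\otimes\mathbb C(t))$ (using that $\mu$ and $\lambda$ commute) makes the complex acyclic, so $H_*(T,\mathcal L)=0$. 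Second, the long exact sequence of the pair $(M_K,\partial M_K)$ then yields $H_i(M_K,\mathcal L)\cong H_i(M_K,\partial M_K;\mathcal L)$ for all $i$, and Poincar\'e--Lefschetz duality $H_i(M_K,\partial M_K;\mathcal L)\cong H^{3-i}(M_K;\mathcal L^{*})$ combined with the self-duality $V_{2k+1}\cong V_{2k+1}^{*}$ (and the field automorphism $t\mapsto t^{-1}$ realizing $\alpha^{-1}$) gives $\dim_{\mathbb C(t)}H_i(M_K,\mathcal L)=\dim_{\mathbb C(t)}H_{3-i}(M_K,\mathcal L)$. This re-confirms $\dim H_1=\dim H_2$ and $H_3\cong H_0=0$, but --- being purely formal --- it does not by itself produce the vanishing.

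The main obstacle is therefore exactly $H_2(W,\mathcal L)=0$. I would phrase it through Theorem \ref{thm:kitano}: injectivity of $\partial_2$ is equivalent to the twisted Alexander invariant $\Delta_{K,\rho_{2k+1}}(t)$ being a non-zero element of $\mathbb C(t)$, equivalently to the $\rho_{2k+1}$-twisted Alexander module of the infinite cyclic cover being $\mathbb C[t^{\pm1}]$-torsion, equivalently to the specialized complex being acyclic for some (hence generic) value $t=t_0\in\mathbb C^{*}$. The subtlety that makes this harder than the even case is that the obvious choice $t_0=1$ fails here: the recorded computation $\dim_{\mathbb C}H^1(M_K,\rho_{2k+1})=\dim_{\mathbb C}H^2(M_K,\rho_{2k+1})=1$ says the $t=1$ specialization is not acyclic, and this nonzero homology is carried by the cusp through the fixed vector $v\in V_{2k+1}$. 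The heart of the proof, following Proposition 7 of \cite{yamaguchi0} (Proposition 3.1.1 of \cite{yamaguchi1}), is to show that deforming the peripheral holonomy by $t\neq1$ destroys this cuspidal class --- the same parabolic mechanism that gave $H_*(T,\mathcal L)=0$ above --- thereby exhibiting an acyclic specialization and forcing $H_2(W,\mathcal L)=0$. Granting this, the Euler characteristic reduction closes the argument and yields $H_1(M_K,\mathcal L)=H_2(M_K,\mathcal L)=0$.
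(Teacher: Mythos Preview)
Your proposal is correct and follows essentially the same route as the paper: the paper's entire ``proof'' is the sentence that the lemma holds ``by the same argument as Proposition 7 in \cite{yamaguchi0} or Proposition 3.1.1 in \cite{yamaguchi1},'' and you likewise defer the substantive step to exactly those references. What you add --- the Euler-characteristic reduction to $H_2$, the acyclicity of the boundary torus via the parabolic eigenvalue computation, and the duality check --- is correct scaffolding that the paper omits entirely, so your write-up is strictly more informative while resting on the same cited input.
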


\begin{proposition}\label{prop:yamaguchi}
${\rm tor}(M_K,\alpha\otimes\rho_{2k+1})$ has a simple zero at $t=1$. 
Moreover the following holds:
$$
{\rm tor}(M_K,\rho_{2k+1},{\bf h})
=
\lim_{t\to 1}\frac{{\rm tor}(M_K,\alpha\otimes\rho_{2k+1})}{t-1}.
$$
\end{proposition}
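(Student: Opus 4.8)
The plan is to work over the discrete valuation ring $R=\mathbb{C}[t]_{(t-1)}$, with uniformizer $s=t-1$, fraction field $\mathbb{C}(t)$ and residue field $\mathbb{C}$, and to regard $C_*(W,\alpha\otimes\rho_{2k+1})$ (with its integral basis ${\bf 1}\otimes{\bf c}$) as a based complex $C_*(t)$ of free $R$-modules. Specializing at $t=1$ returns the untwisted complex $\overline C_*=C_*(W,\rho_{2k+1})$. By Lemma \ref{lem:acyclic} together with $H_0(W,\alpha\otimes\rho_{2k+1})=0$ (Proposition 3.5 in \cite{kirk-livingston}), the complex $C_*(t)\otimes_R\mathbb{C}(t)$ is acyclic, so $\text{tor}(M_K,\alpha\otimes\rho_{2k+1})$ is a well-defined nonzero element of $\mathbb{C}(t)^*/\{\pm1\}$; on the other hand $\overline C_*$ has $H_0=0$ and $H_1\cong H_2\cong\mathbb{C}$ with the explicit basis ${\bf h}=\{h^1,h^2\}$ of Proposition \ref{prop:homology}. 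Rather than compute the order of the zero first, I would prove the displayed limit formula directly and then read off simplicity of the zero as a consequence: the right-hand side $\text{tor}(M_K,\rho_{2k+1},{\bf h})$ is, by construction, a nonzero element of $\mathbb{C}^*/\{\pm1\}$, so once the limit is shown to equal it, the limit is finite and nonzero and the zero of $\text{tor}(M_K,\alpha\otimes\rho_{2k+1})$ at $t=1$ is automatically simple.

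The mechanism is Milnor's base-tracking through the valuation ring. From the Bockstein short exact sequence of based complexes
$$
0\to C_*(t)\xrightarrow{\ s\ }C_*(t)\to\overline C_*\to0,
$$
the induced long exact sequence, the acyclicity over $\mathbb{C}(t)$ (which makes each $H_i(C_*(t))$ a finite-length torsion $R$-module), and the dimension count $\dim H_0(\overline C_*)=0$, $\dim H_1(\overline C_*)=\dim H_2(\overline C_*)=1$, one deduces that $H_0(C_*(t))=H_2(C_*(t))=0$ while $H_1(C_*(t))\cong R/s^{a}$ for a single $a\ge1$; the order of vanishing of the torsion at $t=1$ is then exactly this $a$. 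I would next choose $R$-bases $b^i$ for the boundary modules together with lifts $\widetilde b^{i-1}$, compatible with the splitting $C_i(t)=B_i\oplus\widetilde B_{i-1}$ used in Definition \ref{def:torsion}. At $t=1$ all of these specialize to admissible boundary data for $\overline C_*$ except for one vector in degree $1$ (tied to the rank drop of $\partial_2$, i.e.\ to $H_2(\overline C_*)=\ker\overline{\partial}_2$), which degenerates by the factor $s^{a}$ and whose residue represents the homology class $h^1$ (paired, via the connecting map, with $h^2=i_*([v\otimes\widetilde T])$). Dividing $\text{tor}(M_K,\alpha\otimes\rho_{2k+1})$ by $(t-1)$ exactly cancels this degeneration, and the resulting transformation matrices are precisely the ones $[b^i,\widetilde b^{i-1},\widetilde h^i/c^i]$ that define $\text{tor}(M_K,\rho_{2k+1},{\bf h})$ in Definition \ref{def:torsion}.

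The hard part is the first-order, geometric input: that the degenerating basis vector matches the chosen homology basis to leading order and, equivalently, that $a=1$. This is exactly Yamaguchi's computation (Proposition 7 in \cite{yamaguchi0}, Proposition 3.1.1 in \cite{yamaguchi1}) carried over from the adjoint case $\rho_3$ to $\rho_{2k+1}$. Concretely I would realize $h^1$ and $h^2$ by cycles on the cusp torus $T$ built from the $\rho_{2k+1}(G)$-invariant vector $v$, expand the twisted boundary operator to first order in $s=t-1$ along the meridian and longitude directions of $T$, and check that the resulting linear term is nonzero; this nondegeneracy, coming from the fact that $T$ is a torus and $v$ is fixed by $\rho_{2k+1}(G)$, is what pins the order of vanishing at $1$ and simultaneously identifies the leading coefficient. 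Granting this computation, $\lim_{t\to1}\text{tor}(M_K,\alpha\otimes\rho_{2k+1})/(t-1)=\pm\,\text{tor}(M_K,\rho_{2k+1},{\bf h})$ in $\mathbb{C}^*/\{\pm1\}$, which is finite and nonzero, so the zero at $t=1$ is simple and the proposition follows.
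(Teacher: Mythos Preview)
Your argument is correct, but the route differs from the paper's. The paper does not pass through the DVR/Bockstein formalism at all: it singles out an explicit two--term subcomplex $C'_*(W,\alpha\otimes\rho_{2k+1})$ spanned by $1\otimes v\otimes\widetilde{\varphi(T)}$ in degree~$2$ and $1\otimes v\otimes\widetilde{\varphi(\lambda)}$ in degree~$1$, computes directly that $\partial(1\otimes v\otimes\widetilde{\varphi(T)})=(t-1)\,(1\otimes v\otimes\widetilde{\varphi(\lambda)})$ so that $\mathrm{tor}(C'_*)=t-1$, and then applies the multiplicativity of torsion in short exact sequences (Proposition~3.3.1 of \cite{yamaguchi1}) to the pair $C'_*\subset C_*$. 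This yields the simple zero and the identification of the leading coefficient with $\mathrm{tor}(M_K,\rho_{2k+1},{\bf h})$ in one stroke. Your approach is more structural: the long exact sequence over $R=\mathbb C[t]_{(t-1)}$ pins the order of vanishing as the length $a$ of $H_1(C_*(t))\cong R/s^a$, and your ``first--order'' check is exactly the paper's boundary computation, repackaged as the statement that the connecting map sends the generator of $H_2(\overline C_*)$ to a \emph{generator} (not merely a nonzero element) of $H_1(C_*(t))$, forcing $a=1$. What your approach buys is a conceptual explanation of why the zero is simple; what it costs is the extra bookkeeping of tracking how your Smith--normal--form $R$--bases specialize at $t=1$ to the data $\{b^i,\widetilde b^{i-1},\widetilde h^i\}$ for the \emph{prescribed} ${\bf h}$---exactly the step the paper's subcomplex/multiplicativity argument absorbs automatically.
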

\begin{proof}
We define the subchain complex $C'_*(W,\rho_{2k+1})$ of the chain complex $C_*(W,\rho_{2k+1})$ by
$$
C'_2(W,\rho_{2k+1})=\text{span}_{\mathbb C}\langle v\otimes\widetilde{\varphi(T)}\rangle,\,
\hspace{0.5cm}
C'_1(W,\rho_{2k+1})=\text{span}_{\mathbb C}\langle v\otimes\widetilde{\varphi(\lambda)}\rangle
$$
and 
$C'_i(W,\rho_{2k+1})=0\,\,(i\neq 1,2)$. 
Note that $v$ is fixed by $\rho_{2k+1}(G)$, 
and the boundary operators of $C'_*(W,\rho_{2k+1})$ are zero 
by the definition.
The modules of this subchain complex are 
lifts of homology groups $H_*(W,\rho_{2k+1})$.
Similarly, 
we define the subcomplex $C'_*(W,\alpha\otimes\rho_{2k+1})$ 
of $C_*(W,\alpha\otimes\rho_{2k+1})$ by 
$$
C'_2(W,\alpha\otimes\rho_{2k+1})=\text{span}_{\mathbb C(t)}\langle 1\otimes v\otimes\widetilde{\varphi(T)}\rangle,\,
\hspace{0.3cm}
C'_1(W,\alpha\otimes\rho_{2k+1})=\text{span}_{\mathbb C(t)}\langle 1\otimes v\otimes\widetilde{\varphi(\lambda)}\rangle
$$
and 
$C'_i(W,\alpha\otimes\rho_{2k+1})=0$ for $i\neq 1,2$. 
Since $v$ is an invariant vector of $\rho_{2k+1}(G)$, 
we have: 
\begin{align*}
\bdd(1\otimes v\otimes\widetilde{\varphi(T)})
&=1\otimes v \otimes\bdd(\widetilde{\varphi(T)})\\
&=1\otimes v\otimes(\mu\cdot\widetilde{\varphi(\lambda)})
	-1\otimes v\otimes\widetilde{\varphi(\lambda)}\\
&=t\otimes\rho_{2k+1}^{-1}(\mu)v\otimes\widetilde{\varphi(\lambda)}
	-1\otimes v\otimes\widetilde{\varphi(\lambda)}\\
&=t\otimes v\otimes\widetilde{\varphi(\lambda)}
	-1\otimes v\otimes\widetilde{\varphi(\lambda)}\\
&=(t-1)(1\otimes v\otimes\widetilde{\varphi(\lambda)})
\end{align*}
Thus the boundary operators of $C'_*(W,\alpha\otimes\rho_{2k+1})$ 
is given by 
$$
0\to C'_2(W,\alpha\otimes\rho_{2k+1})\overset{t-1}{\longrightarrow}
C'_1(W,\alpha\otimes\rho_{2k+1})\to0.
$$
This means that the homology of
$C'_*(W,\alpha\otimes\rho_{2k+1})$ is zero. 

By the definition,  
the chain complex $C'_*(W,\rho_{2k+1})$ has the natural basis:
$$
{\bf c'}=\{v\otimes\widetilde{\varphi(T)},\,v\otimes\widetilde{\varphi(\lambda)}\}.
$$
Let $C''_*(W,\rho_{2k+1})$ be the quotient of 
$C_*(W,\rho_{2k+1})$ by $C'_*(W,\rho_{2k+1})$, 
${\bf c''}$ a basis of $C''_*(W,\rho_{2k+1})$,  
and 
${\bf \bar{c}''}$ a lift of ${\bf c''}$ to $C_*(W,\rho_{2k+1})$. 
By Lemma \ref{lem:acyclic}, we can apply Proposition 3.3.1 in \cite{yamaguchi1}
to this setting, then we have:
$$
\lim_{t\to 1}
\frac{{\rm tor}(C_*(W,\alpha\otimes\rho_{2k+1}), {\bf 1}\otimes{\bf c'}\sqcup{\bf 1}\otimes{\bf\bar{c}''})}
{{\rm tor}(C'_*(W,\alpha\otimes\rho_{2k+1}),{\bf 1}\otimes{\bf c'})}
=
{\rm tor}(C_*(W,\rho_{2k+1}),{\bf c'}\sqcup{\bf\bar{c}''},{\bf h}).
$$
By the calculation above, 
we have ${\rm tor}(C'_*(W,\alpha\otimes\rho_{2k+1}),{\bf 1}\otimes {\bf c'})=t-1$, 
thus we have this proposition.
\end{proof}

\noindent{\bf Proof of Theorem \ref{thm:main}}.

By Theorem \ref{thm:kitano} and Lemma \ref{lem:acyclic}, we have 
$\text{tor}(M_K,\alpha\otimes\rho_{2k+1})=\Delta_{K,\rho_{2k+1}}(t).$
We also have 
$\Delta_{K,\rho_{2k+1}}(t)=(t-1)\tilde{\Delta}_{K,\rho_{2k+1}}(t)$ 
and
$\text{tor}(M_K,\rho_{2k+1},{\bf h})=\tilde{\Delta}_{K,\rho_{2k+1}}(1)$
by Proposition \ref{prop:yamaguchi}, 
where $\tilde{\Delta}_{K,\rho_{2k+1}}(t)$ is a rational function.
Then, 
\begin{align*}
\mathcal A_{K,2k+1}(1)
=
\frac{\tilde{\Delta}_{K,\rho_{2k+1}}(1)}{\tilde{\Delta}_{K,\rho_{3}}(1)}
=
\frac{\text{tor}(M_K,\rho_{2k+1},{\bf h})}{\text{tor}(M_K,\rho_3,{\bf h})}
=
\mathcal T_{2k+1}(M_K).
\end{align*}
Thus we have Theorem \ref{thm:main} by Theorem \ref{thm:portivolume}.
\begin{flushright}
$\square$
\end{flushright}

\section{Some calculations on the figure eight knot complement}

Let $K$ be the figure eight knot $4_1$. 
Note that it is known that the volume of $K$ is $2.02988\cdots$.
The knot group $G(K)$ has the following presentation:
$$G(K)=
\langle 
a,b~|~ab^{-1}a^{-1}ba=bab^{-1}a^{-1}b
\rangle,
$$
where $a$ and $b$ correspond to the meridians of $K$.
Consider the representation of this fundamental group: 
$$
\rho(a)=
\begin{pmatrix}
1 & 1 \\
0 & 1
\end{pmatrix},\,
\rho(b)=
\begin{pmatrix}
\hphantom{-}1 & 0 \\
-u & 1
\end{pmatrix},
$$
where $u$ is a complex value satisfying $u^2+u+1=0$. 
This representation is the holonomy representation 
of $G(K)$. 
By the definition, 
we have 
$p\big(\rho(a)^{-1}
\begin{pmatrix}
x\\
y
\end{pmatrix}
\big)
=
p
\begin{pmatrix}
x-y\\
y
\end{pmatrix}
$, 
and
$(x-y)^2=x^2-2xy+y^2,\,
 (x-y)y=xy-y^2.
$
Hence, we have:
$$
\rho_3(a)=
\begin{pmatrix}
\hphantom{-}1 & \hphantom{-}0 & \hphantom{-}0 \\
-2 & \hphantom{-}1 & \hphantom{-}0\\
\hphantom{-}1 & -1 & \hphantom{-}1
\end{pmatrix}.
$$
By the same calculations, we have :
$$
\rho_3(b)=
\begin{pmatrix}
1 & u & u^2 \\
0 & 1 & 2u \\
0 & 0 & 1
\end{pmatrix}
,\,
\rho_4(a)=
\begin{pmatrix}
\hphantom{-}1 & \hphantom{-}0 & \hphantom{-}0 & 0 \\
-3 & \hphantom{-}1 & \hphantom{-}0 & 0 \\
\hphantom{-}3 & -2 & \hphantom{-}1 & 0\\
-1 & \hphantom{-}1 & -1 & 1
\end{pmatrix},\,
\rho_4(b)=
\begin{pmatrix}
1 & u & u^2 & u^3 \\
0 & 1 & 2u & 3u^2\\
0 & 0 & 1 & 3u\\
0 & 0 & 0 & 1
\end{pmatrix},
\cdots
$$
Set 
$A=\rho_2(a)={}^t\rho(a)^{-1}=
\begin{pmatrix}
\hphantom{-}1 & 0\\
-1 & 1
\end{pmatrix}$
and 
$B=
\rho_2(b)={}^t\rho(b)^{-1}=
\begin{pmatrix}
1 & u\\
0 & 1
\end{pmatrix}$.
Via Fox's calculus for $G(K)$, 
we obtain the denominator of 
$\Delta_{K,\rho_2}(t)=\det(tB-I)=(t-1)^2$. 
On the other hand, 
the numerator of 
$\Delta_{K,\rho_2}(t)=\det(I-t^{-1}AB^{-1}A^{-1}+
AB^{-1}A^{-1}B-tB+BAB^{-1}A^{-1})
=\frac{1}{t^2}(t-1)^2(t^2-4t+1).$
Here we use the value 
$u=\frac{-1+\sqrt{-3}}{2}$. 
Continuing in this way, we have obtained the following 
data. 
$$
\Delta_{K,\rho_2}(t)=
\frac{1}{t^2}(t^2-4t+1),\,
\Delta_{K,\rho_3}(t)=
-\frac{1}{t^3}(t-1)(t^2-5t+1)
$$
$$
\Delta_{K,\rho_4}(t)=
\frac{1}{t^4}(t^2-4t+1)^2,\,
\Delta_{K,\rho_5}(t)=
-\frac{1}{t^5}(t-1)(t^4-9t^3+44t^2-9t+1),\,
$$
\begin{align*}
\frac{4\pi \log\mathcal |\mathcal A_{K,4}(t)|}{4^2}
&=\frac{\pi\log|t^2-4t+1|}{4}\overset{t=1}{\longrightarrow}\frac{\pi\log 2}{4}\approx 0.544397\cdots \\
\frac{4\pi \log\mathcal |\mathcal A_{K,5}(t)|}{5^2}
&=\frac{4\pi\log|\frac{t^4-9t^3+44t^2-9t+1}{t^2-5t+1}|}{5^2}
\overset{t=1}{\longrightarrow} \frac{4\pi\log\frac{28}{3}}{5^2}\approx 1.12273\cdots
\end{align*}

\begin{table}[htb]
  \begin{tabular}{|c|c||c|c|} \hline
 $n$ & $\frac{4\pi\log|\mathcal A_{K,n}(1)|}{n^2}$ & $n$ & $\frac{4\pi\log|\mathcal A_{K,n}(1)|}{n^2}$ \\ \hline \hline
 $6$ & $1.35850\cdots$ & $7$ & $1.58331\cdots$ \\ \hline
 $8$ & $1.66441\cdots$ & $9$ & $1.76436\cdots$ \\ \hline
 $10$ & $1.79618\cdots$ & $11$ & $1.85105\cdots$ \\ \hline
 $12$ & $1.86678\cdots$ & $13$ & $1.90158\cdots$ \\ \hline
 $14$ & $1.91009\cdots$ & $15$ & $1.93361\cdots$ \\ \hline
 \end{tabular}
\end{table}
These calculations were done by using Wolfram Mathematica.

\bibliographystyle{amsplain}

\end{document}